\newtheorem{theorem}{Theorem}
\newtheorem{lemma}{Lemma}
\newtheorem{proposition}{Proposition}
\newtheorem{definition}{Definition}
\begin{document}

\title[]
{Representations of the loop braid groups from braided tensor categories}

\author{Liang Chang}
\address{Chern Institute of Mathematics and LPMC\\
    Nankai University \\
    Tianjin, China 300071}
\email{changliang996@nankai.edu.cn}
\thanks{ }

\begin{abstract}
 The loop braid group is the motion group of unknotted oriented circles in $\mathbb{R}^3$. In this paper, we study their representations through the approach inspired by two dimensional topological phases of matter. In principle, the motion of loops in $\mathbb{R}^3$ reduces to the motions of points in a two dimensional sliced plane. We realize this physical picture in terms of braided tensor categories and their braid group representations. 
\end{abstract}

\maketitle

\section{Introduction}

In the topological phases of matter in two spacial dimensions, the point-like excitations, called \textit{anyons}, are modeled by a unitary modular category, equivalently, by a (2+1) topological quantum field theory (TQFT). The motion of anyons is described by the corresponding braid group representations from the modular category, which can yield topological quantum computation models \cite{FLW1, FKLW, Zhenghan}.

It is naturally to consider the excitations in 3d physical systems. By the spin-statistics theorem, point-like excitations in three spatial dimensions are all bosons or fermions. Switching two identical particles results in a plus/minus sign in their wave function. In this sense, the motion of point-like excitations only give the symmetric group representations. However, other than point-like excitations, there exist string-like excitations in the 3d topological phases of matter. The groups of their motion generalize the usual braid groups and were studied in \cite{dahm, Gold, lin} mathematically and physically in \cite{BWC, WL}, etc. In particular, the motion group of unknotted oriented circles in $\mathbb{R}^3$ are called\textit{ loop braid group}. While the unknotted circles are all linked to another base circle, their motion forms the \textit{necklace braid group}. Their representation theory have been studied through algebraic and gauge theory approaches (\cite{BCHPRS}, \cite{bard2}, \cite{BKMR}, etc.) .

In general, the topological phases of matter in three spacial dimensions are encoded by (3+1) TQFTs. Unlike the (2+1) TQFTs have been well understood by the rich theory of modular categories, the categorical theory for the (3+1) TQFTs have not been established completely. In order to obtain the representations of the motion group of links from the categorical perspective, we shall reduce three spacial dimensions to two spacial dimensions by focusing on the intersection of the links and an auxiliary plane. From this view, the motion of links is simulated by the motion of the intersecting points. Although these intersections can not recover all motions for any links, in some case they can still completely present the generators of some motion groups. In this paper, we will show the loop braid groups can be realized by the motion of pairs of points. As a result, we obtain loop braid group representations from the braid group representation based on braid tensor categories. 

This paper is organized as follows. In Section 2, we first review the notion of loop braid groups. Then we will show that the double-strand braids have two type of braidings and almost realize the loop braids in Section 3. Finally, we obtain loop braid group representation whenever two anyons fuse into several bosons or fermions.

\section{Loop braid groups}

The $n$-component loop braid is geometrically illustrated as the motions of $n$ unknotted oriented circles in $\mathbb{R}^3$.  We can present the generating loop braids diagrammatically via the trajectories of circles in the time direction as follows. 

\[
\begin{tikzpicture}[scale=0.8]
\begin{scope}[xshift=-5.5cm]
\begin{scope}[xshift=-1.5cm, yshift=-2cm]
\draw (0,0) arc (180:360:{0.5} and {0.2});
\draw[dashed] (0,0) arc (180:0:{0.5} and {0.2});
\end{scope}
\begin{scope}[xshift=0.5cm, yshift=-2cm]
\draw (0,0) arc (180:360:{0.5} and {0.2});
\draw[dashed] (0,0) arc (180:0:{0.5} and {0.2});
\end{scope}
\draw [dashed] (0.5,0) arc (0:180:{0.5} and {0.2});
\draw [line width=0.5mm, white] (-1.5,-2) to [out=90, in=-90] (-0.3,0) to [out=90, in=-90] (0.5,2); 
\draw [line width=0.5mm, white] (-0.5,-2) to [out=90, in=-90] (0.3,0) to [out=90, in=-90] (1.5,2); 
\draw (-1.5,-2) to [out=90, in=-90] (-0.3,0) to [out=90, in=-90] (0.5,2); 
\draw (-0.5,-2) to [out=90, in=-90] (0.3,0) to [out=90, in=-90] (1.5,2); 
\draw (-0.3,0) arc (180:360:{0.3} and {0.1});
\draw[dashed] (0.3,0) arc (0:180:{0.3} and {0.1});
\draw [line width=1mm, white] (1.5,-2) to [out=90, in=-90] (0.5,0) to [out=90, in=-90] (-0.5,2); 
\draw [line width=1mm, white] (0.5,-2) to [out=90, in=-90] (-0.5,0) to [out=90, in=-90] (-1.5,2);
\draw (1.5,-2) to [out=90, in=-90] (0.5,0) to [out=90, in=-90] (-0.5,2); 
\draw (0.5,-2) to [out=90, in=-90] (-0.5,0) to [out=90, in=-90] (-1.5,2); 
\draw[line width=1mm, white] (0.5,0) arc (180:360:{0.5} and {0.2});
\draw (-0.5,0) arc (180:360:{0.5} and {0.2});
\draw (1.5,-2) to [out=90, in=-90] (0.5,0) to [out=90, in=-90] (-0.5,2); 
\draw (0.5,-2) to [out=90, in=-90] (-0.5,0) to [out=90, in=-90] (-1.5,2); 
\begin{scope}[xshift=-1.5cm, yshift=2cm]
\draw (0,0) arc (180:360:{0.5} and {0.2});
\draw (0,0) arc (180:0:{0.5} and {0.2});
\end{scope}
\begin{scope}[xshift=0.5cm, yshift=2cm]
\draw (0,0) arc (180:360:{0.5} and {0.2});
\draw (0,0) arc (180:0:{0.5} and {0.2});
\end{scope}
\draw (3,-2) to (3,2); 
\begin{scope}[xshift=-3cm]
\begin{scope}[yshift=-2cm]
\draw (0,0) arc (180:360:{0.5} and {0.2});
\draw[dashed] (0,0) arc (180:0:{0.5} and {0.2});
\end{scope}
\begin{scope}[yshift=2cm]
\draw (0,0) arc (180:360:{0.5} and {0.2});
\draw (0,0) arc (180:0:{0.5} and {0.2});
\end{scope}
\draw (0,-2) to (0,2); 
\draw (1,-2) to (1,2); 
\end{scope}
\begin{scope}[xshift=2cm]
\begin{scope}[yshift=-2cm]
\draw (0,0) arc (180:360:{0.5} and {0.2});
\draw[dashed] (0,0) arc (180:0:{0.5} and {0.2});
\end{scope}
\begin{scope}[yshift=2cm]
\draw (0,0) arc (180:360:{0.5} and {0.2});
\draw (0,0) arc (180:0:{0.5} and {0.2});
\end{scope}
\draw (0,-2) to (0,2); 
\draw (1,-2) to (1,2); 
\end{scope}
\node (k) at (-4,0) {\Large $\dots$};
\node (k) at (4,0) {\Large $\dots$};
\node (k) at (0,-2.7) {\Large $\sigma_i$};
\end{scope}
\node (k) at (0,-2) {\Large ,};
\begin{scope}[xshift=5.5cm]
\begin{scope}[xshift=-1.5cm, yshift=-2cm]
\draw (0,0) arc (180:360:{0.5} and {0.2});
\draw[dashed] (0,0) arc (180:0:{0.5} and {0.2});
\end{scope}
\begin{scope}[xshift=0.5cm, yshift=-2cm]
\draw (0,0) arc (180:360:{0.5} and {0.2});
\draw[dashed] (0,0) arc (180:0:{0.5} and {0.2});
\end{scope}
\draw (-1.5,-2) to [out=90, in=-90] (0.5,2); 
\draw (-0.5,-2) to [out=90, in=-90] (1.5,2); 
\draw [line width=1.5mm, white] (1.5,-2) to [out=90, in=-90] (-0.5,2); 
\draw [line width=1.5mm, white] (0.5,-2) to [out=90, in=-90] (-1.5,2);
\draw (1.5,-2) to [out=90, in=-90] (-0.5,2); 
\draw (0.5,-2) to [out=90, in=-90] (-1.5,2); 
\begin{scope}[xshift=-1.5cm, yshift=2cm]
\draw (0,0) arc (180:360:{0.5} and {0.2});
\draw (0,0) arc (180:0:{0.5} and {0.2});
\end{scope}
\begin{scope}[xshift=0.5cm, yshift=2cm]
\draw (0,0) arc (180:360:{0.5} and {0.2});
\draw (0,0) arc (180:0:{0.5} and {0.2});
\end{scope}
\begin{scope}[xshift=-3cm]
\begin{scope}[yshift=-2cm]
\draw (0,0) arc (180:360:{0.5} and {0.2});
\draw[dashed] (0,0) arc (180:0:{0.5} and {0.2});
\end{scope}
\begin{scope}[yshift=2cm]
\draw (0,0) arc (180:360:{0.5} and {0.2});
\draw (0,0) arc (180:0:{0.5} and {0.2});
\end{scope}
\draw (0,-2) to (0,2); 
\draw (1,-2) to (1,2); 
\end{scope}
\begin{scope}[xshift=2cm]
\begin{scope}[yshift=-2cm]
\draw (0,0) arc (180:360:{0.5} and {0.2});
\draw[dashed] (0,0) arc (180:0:{0.5} and {0.2});
\end{scope}
\begin{scope}[yshift=2cm]
\draw (0,0) arc (180:360:{0.5} and {0.2});
\draw (0,0) arc (180:0:{0.5} and {0.2});
\end{scope}
\draw (0,-2) to (0,2); 
\draw (1,-2) to (1,2); 
\end{scope}
\node (k) at (-4,0) {\Large $\dots$};
\node (k) at (4,0) {\Large $\dots$};
\node (k) at (0,-2.7) {\Large $s_j$};
\end{scope}
\end{tikzpicture}
\]

These diagrams shows two types of generators of loop braid groups. The generator $\sigma_i$ is to pass the $i$-th circle under and through the  $(i+1)$-th circle ending with the two circles' positions interchanged. The generator $s_i$ is simply to switch the $i$-th and $(i+1)$-th circles. Algebraically, the loop braid group is defined by its generators and relations. 
\begin{definition}
	The $n$-component \textbf{loop braid group} $\text{LB}_n$ is the group generated by $\sigma_i, s_j$ for $1\leq i, j \leq n$ satisfying the following relations:
	\begin{enumerate}
		\item[(B1)] $\sigma_i\sigma_{i+1}\sigma_i=\sigma_{i+1}\sigma_i\sigma_{i+1} $
		\item[(B2)] $\sigma_i\sigma_{j}=\sigma_j\sigma_{i} ~~\text{for}~~ |i-j|>1$
		\item[(S1)] $s_js_{j+1}s_j=s_{j+1}s_js_{j+1} $
		\item[(S2)] $s_j^2=1 $
		\item[(S3)] $s_i s_{j}=s_j s_{i} ~~\text{for}~~ |i-j|>1$
		\item[(M1)] $s_is_{i+1}\sigma_i=\sigma_{i+1}s_is_{i+1}$
		\item[(M2)] $\sigma_i\sigma_{i+1}s_i=s_{i+1}\sigma_i\sigma_{i+1}$
		\item[(M3)] $\sigma_i s_j=s_j \sigma_i ~~\text{for}~~ |i-j|>1$
	\end{enumerate}
\end{definition}

This set of generators and relations was introduced in \cite{lin} and \cite{BWC}. It is as the same as the definition of welded braid groups in the virtual knot theory \cite{Kauf}. There double points are allowed in the virtual knot diagrams. That is, the welded braid groups is generated by the following set of crossings modulo the same relations as in Definition 1. Therefore it is possible to study the loop braid group $\text{LB}_n$ as extension of the braid group  $\text{B}_n$. We refer readers to \cite{journey} for the equivalence of various definitions of $\text{LB}_n$ and welded diagram approaches.

\[
\begin{tikzpicture}[scale=0.4]
\begin{scope}[xshift=-8cm]
\draw (-2,-2) to (2,2); 
\draw [line width=2mm, white] (2,-2) to (-2,2); 
\draw (2,-2) to (-2,2); 
\draw (-3,-2) to (-3,2); 
\draw (3,-2) to (3,2); 
\node (k) at (-5,0) {\Large $\dots$};
\node (k) at (5,0) {\Large $\dots$};
\node (k) at (0,-2.7) {\Large $\sigma_i$};
\end{scope}
\node (k) at (0,-2) {\Large ,};
\begin{scope}[xshift=8cm]
\draw (-2,-2) to (2,2); 
\draw (2,-2) to (-2,2); 
\draw (-3,-2) to (-3,2); 
\draw (3,-2) to (3,2); 
\node (k) at (-5,0) {\Large $\dots$};
\node (k) at (5,0) {\Large $\dots$};
\fill[black, opacity=1] (0,0) circle (5pt);
\node (k) at (0,-2.7) {\Large $s_j$};
\end{scope}
\end{tikzpicture}
\]

\section{two types of double-strand braidings}

When a circle intersects a plane transversely, the intersection is a pair of points. While the circle travels parallel to the plane, its motion can be captured by the motion of these two points. In particular, the generating motions $\sigma_i$ and $s_j$ are shown below.

\[
\begin{tikzpicture}[scale=0.5]
\begin{scope}
\draw (-2,-1) to [out=45, in=-135] (4,5) to [out=0, in=180] (14,5)to [out=-135, in=45] (8,-1) to [out=180, in=0] (-2,-1);
\draw [line width=0.5mm] (2.5,2) to [out=90, in=180] (3.5,3) to [out=0, in=90] (4.5,2);
\draw [line width=0.5mm] (8,2) to [out=90, in=180] (9,3) to [out=0, in=90] (10,2);
\draw [->] (3,2.5) to [out=-90, in=180] (5,0.5);
\draw [->]  (5.5,0.45) to [out=0, in=-90] (9.3,2.5);
\draw [->] (8.9,2.6) to [out=-90, in=0] (6.1,1.6);
\draw [->]  (5.5,1.6) to [out=180, in=-90] (3.5,2.6);
\draw [white, line width=1mm] (5.2,0.2) to [out=90, in=180] (5.5,1) to [out=0, in=90] (5.8,0.5);
\draw [line width=0.5mm] (5.2,0.2) to [out=90, in=180] (5.5,1) to [out=0, in=90] (5.8,0.5);
\draw [white, line width=1.2mm] (4.5,-0.4) to [out=90, in=180] (5.5,1.6) to [out=0, in=90] (6.3,0.9);
\draw [line width=0.5mm] (4.5,-0.4) to [out=90, in=180] (5.5,1.6) to [out=0, in=90] (6.3,0.9);
\fill[black, opacity=1] (2.5,2) circle (3pt);
\fill[black, opacity=1] (4.5,2) circle (3pt);
\fill[black, opacity=1] (8,2) circle (3pt);
\fill[black, opacity=1] (10,2) circle (3pt);
\fill[black, opacity=1] (5.2,0.2) circle (3pt);
\fill[black, opacity=1] (5.8,0.5) circle (3pt);
\fill[black, opacity=1] (4.5,-0.4) circle (3pt);
\fill[black, opacity=1] (6.3,0.9) circle (3pt);
\node (k) at (3,-2) {\Large $\sigma_i$};
\end{scope}
\begin{scope}[xshift=16cm]
\draw (-3,-1) to [out=45, in=-135] (3,5) to [out=0, in=180] (13,5)to [out=-135, in=45] (7,-1) to [out=180, in=0] (-3,-1);
\draw [line width=0.5mm] (1.5,2) to [out=90, in=180] (2.5,3) to [out=0, in=90] (3.5,2);
\draw [line width=0.5mm] (7,2) to [out=90, in=180] (8,3) to [out=0, in=90] (9,2);
\draw [->] (8,2) to [out=-90, in=0] (4.7,0.2) to [out=180, in=-90] (2.5,2);
\draw [<-] (8,3.2) to [out=90, in=0] (6,4.5) to [out=180, in=90] (2.5,3.2);
\fill[black, opacity=1] (1.5,2) circle (3pt);
\fill[black, opacity=1] (3.5,2) circle (3pt);
\fill[black, opacity=1] (7,2) circle (3pt);
\fill[black, opacity=1] (9,2) circle (3pt);
\node (k) at (2,-2) {\Large $s_j$};
\end{scope}
\end{tikzpicture}
\]

As a result, we reduce the loop braid groups to the braid groups. That is, the trajectories of the intersecting points in the  time direction form the braids in $\text{B}_{2n}$. Correspondingly, the motions $\sigma_i$ and $s_j$ give the following braids $\widetilde{\sigma}_i$ and $\widetilde{s}_j$  in $\text{B}_{2n}$, repsectively. 

\[
\begin{tikzpicture}[scale=0.5]
\begin{scope}[xshift=-2cm]
\draw[line width=0.5mm] (1,0)--(-2,4);
\draw[white, line width=2mm] (-2,0)--(1,4); 
\draw[line width=0.5mm] (-2,0)--(1,4); 
\draw[white, line width=2mm] (-1,0)--(2,4); 
\draw[line width=0.5mm] (-1,0)--(2,4);
\draw[white, line width=2mm] (2,0)--(-1,4); 
\draw[line width=0.5mm] (2,0)--(-1,4); 
\node (x) at (0,-1) {\Large $\widetilde{\sigma}_i$};
\end{scope}
\begin{scope}[xshift=10cm]
\draw[line width=0.5mm] (-2,0)--(1,4); 
\draw[line width=0.5mm] (-1,0)--(2,4);
\draw[white, line width=2mm] (2,0)--(-1,4); 
\draw[white, line width=2mm] (1,0)--(-2,4);
\draw[line width=0.5mm] (2,0)--(-1,4); 
\draw[line width=0.5mm] (1,0)--(-2,4);
\node (x) at (0,-1) {\Large $\widetilde{s}_j$};
\end{scope}	 
\end{tikzpicture}
\]

\begin{proposition}
	The braidings $\widetilde{\sigma}_i$ and $\widetilde{s}_j$ satisfy the braid relations: $\widetilde{\sigma}_{i}\widetilde{\sigma}_{i+1}\widetilde{\sigma}_{i}=\widetilde{\sigma}_{i+1}\widetilde{\sigma}_{i}\widetilde{\sigma}_{i+1}$, $\widetilde{s}_{j}\widetilde{s}_{j+1}\widetilde{s}_{j}=\widetilde{s}_{j+1}\widetilde{s}_{j}\widetilde{s}_{j+1}$,  the mixed relation (M1): $\widetilde{s}_{k}\widetilde{s}_{k+1}\widetilde{\sigma}_{k}=\widetilde{\sigma}_{k+1}\widetilde{s}_{k}\widetilde{s}_{k+1}$, and all the far commutativities (B2), (S3), (M3). 
\end{proposition}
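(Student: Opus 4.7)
The plan is to translate each of the listed identities into a concrete equality in the ordinary braid group $\text{B}_{2n}$. Reading off the two diagrams, $\widetilde{\sigma}_i$ and $\widetilde{s}_j$ can each be written as an explicit four-letter word in the Artin generators of $\text{B}_{2n}$, supported on the four strands $2i{-}1,\ldots,2i{+}2$ (resp.\ $2j{-}1,\ldots,2j{+}2$). With these words in hand, every claimed identity becomes a finite computation in $\text{B}_{2n}$.

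First, the far-commutativity relations (B2), (S3), (M3) are immediate: when $|i-j|>1$ the words for the two generators involve disjoint Artin letters and therefore commute pairwise in $\text{B}_{2n}$.

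Second, the braid relation $\widetilde{s}_j\widetilde{s}_{j+1}\widetilde{s}_j = \widetilde{s}_{j+1}\widetilde{s}_j\widetilde{s}_{j+1}$ is the classical statement that the two-strand parallel cabling map $\text{B}_n \to \text{B}_{2n}$, $\sigma_j \mapsto \widetilde{s}_j$, is a group homomorphism; diagrammatically this is a ``doubled'' Yang--Baxter move in $\text{B}_6$, proved by three consecutive applications of (B1) together with (B2).

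The real content is the remaining two identities. For both I would argue diagrammatically: draw each side as a six-strand braid on three adjacent pairs, and reduce them to a common braid class by planar isotopy together with (B1) and (B2) in $\text{B}_6$. Geometrically, both sides of the $\widetilde{\sigma}_i$ braid relation realise the same ``threading'' motion of three pairs of points, so the equality reduces to the same underlying Yang--Baxter move, now with the middle strand doubled and with the top/bottom crossings of opposite sign; similarly, (M1) says that swapping the outer two pairs commutes with threading the middle pair through one of them, which is again a doubled Yang--Baxter move.

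I expect the main obstacle to be the $\widetilde{\sigma}_i$ braid relation, whose two sides are words of twelve mixed-sign Artin letters each, so naive word reduction is clumsy. The cleanest organisation I can see is to split each $\widetilde{\sigma}_i$ into its ``positive half'' (the two positive crossings in which the lower pair climbs over the leftmost strand of the upper pair) and its ``negative half'' (the two negative crossings coming from the rightmost strand of the upper pair), slide the halves past each other using (B2), and then collapse by two applications of (B1) in $\text{B}_6$. Once that bookkeeping is done, (M1) follows by the same strategy, with one $\widetilde{\sigma}$ replaced by $\widetilde{s}\widetilde{s}$.
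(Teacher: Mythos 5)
Your proposal is correct and follows essentially the same route as the paper: the paper also verifies the three nontrivial identities by exhibiting both sides as six-strand braid diagrams and reducing one to the other by Reidemeister III (i.e.\ (B1)) moves together with planar isotopy, with the far commutativities dismissed as obvious. Your extra bookkeeping (writing each generator as a four-letter Artin word and splitting $\widetilde{\sigma}_i$ into its positive and negative halves) is a reasonable way to make the paper's ``straightforward to check'' explicit, but it is not a different method.
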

\begin{proof}
	It is straightforwards to check the following braid diagram equalities through several Reidemeister III moves.
	\[
	\begin{tikzpicture}[scale=0.45]
	\begin{scope}
	\draw [line width=0.5mm] (7,0) to [out=90, in=-90] (7,4) to [out=90, in=-90] (4,8)to [out=90, in=-90] (1,12); 
	\draw [white, line width=1.8mm] (4,0) to [out=90, in=-90] (1,4) to [out=90, in=-90] (1,8) to [out=90, in=-90] (4,12); 
	\draw [line width=0.5mm] (4,0) to [out=90, in=-90] (1,4) to [out=90, in=-90] (1,8) to [out=90, in=-90] (4,12); 
	\draw[white, line width=1.8mm] (1,0) to [out=90, in=-90] (4,4) to [out=90, in=-90] (7,8) to [out=90, in=-90] (7,12);
	\draw[line width=0.5mm] (1,0) to [out=90, in=-90] (4,4) to [out=90, in=-90] (7,8) to [out=90, in=-90] (7,12);
	\draw[white, line width=1.8mm] (2,0) to [out=90, in=-90] (5,4) to [out=90, in=-90] (8,8);
	\draw[line width=0.5mm] (2,0) to [out=90, in=-90] (5,4) to [out=90, in=-90] (8,8) to [out=90, in=-90] (8,12);
	\draw[white, line width=1.8mm] (5,0) to [out=90, in=-90] (2,4) to [out=90, in=-90] (2,8) to [out=90, in=-90] (5,12);
	\draw[line width=0.5mm] (5,0) to [out=90, in=-90] (2,4) to [out=90, in=-90] (2,8) to [out=90, in=-90] (5,12);
	\draw[white, line width=1.8mm] (8,0) to [out=90, in=-90] (8,4) to [out=90, in=-90] (5,8) to [out=90, in=-90] (2,12);
	\draw[line width=0.5mm] (8,0) to [out=90, in=-90] (8,4) to [out=90, in=-90] (5,8) to [out=90, in=-90] (2,12);
	\end{scope}
	\begin{scope}[xshift=9.5cm]
	\node (k) at (0,6) {$=$};
	\end{scope}
	\begin{scope}[xshift=10cm]
	\draw [line width=0.5mm] (7,0) to [out=90, in=-90] (4,4) to [out=90, in=-90] (1,8)to [out=90, in=-90] (1,12); 
	\draw [white, line width=1.8mm] (4,0) to [out=90, in=-90] (7,4) to [out=90, in=-90] (7,8) to [out=90, in=-90] (4,12); 
	\draw [line width=0.5mm] (4,0) to [out=90, in=-90] (7,4) to [out=90, in=-90] (7,8) to [out=90, in=-90] (4,12);
	\draw[white, line width=1.8mm] (1,0) to [out=90, in=-90] (1,4) to [out=90, in=-90] (4,8) to [out=90, in=-90] (7,12);
	\draw[line width=0.5mm] (1,0) to [out=90, in=-90] (1,4) to [out=90, in=-90] (4,8) to [out=90, in=-90] (7,12);
	\draw[white, line width=1.8mm] (2,0) to [out=90, in=-90] (2,4) to [out=90, in=-90] (5,8) to [out=90, in=-90] (8,12);
	\draw[line width=0.5mm] (2,0) to [out=90, in=-90] (2,4) to [out=90, in=-90] (5,8) to [out=90, in=-90] (8,12);
	\draw[white, line width=1.8mm] (5,0) to [out=90, in=-90] (8,4) to [out=90, in=-90] (8,8) to [out=90, in=-90] (5,12);
	\draw[line width=0.5mm] (5,0) to [out=90, in=-90] (8,4) to [out=90, in=-90] (8,8) to [out=90, in=-90] (5,12);
	\draw[white, line width=1.8mm] (8,0) to [out=90, in=-90] (5,4) to [out=90, in=-90] (2,8) to [out=90, in=-90] (2,12);
	\draw[line width=0.5mm] (8,0) to [out=90, in=-90] (5,4) to [out=90, in=-90] (2,8) to [out=90, in=-90] (2,12);
	\end{scope}
	\begin{scope}[xshift=9.5cm, yshift=-7cm]
	\node (k) at (0,6) {$\widetilde{\sigma}_{i}\widetilde{\sigma}_{i+1}\widetilde{\sigma}_{i}=\widetilde{\sigma}_{i+1}\widetilde{\sigma}_{i}\widetilde{\sigma}_{i+1}$};
	\end{scope}
	\end{tikzpicture}
	\]
	\[
	\begin{tikzpicture}[scale=0.45]
	\begin{scope}
	\draw [line width=0.5mm] (7,0) to [out=90, in=-90] (7,4) to [out=90, in=-90] (4,8)to [out=90, in=-90] (1,12); 
	\draw[line width=0.5mm] (8,0) to [out=90, in=-90] (8,4) to [out=90, in=-90] (5,8) to [out=90, in=-90] (2,12);
	\draw [white, line width=1.8mm] (4,0) to [out=90, in=-90] (1,4) to [out=90, in=-90] (1,8) to [out=90, in=-90] (4,12); 
	\draw [line width=0.5mm] (4,0) to [out=90, in=-90] (1,4) to [out=90, in=-90] (1,8) to [out=90, in=-90] (4,12); 
	\draw[white, line width=1.8mm] (5,0) to [out=90, in=-90] (2,4) to [out=90, in=-90] (2,8) to [out=90, in=-90] (5,12);
	\draw[line width=0.5mm] (5,0) to [out=90, in=-90] (2,4) to [out=90, in=-90] (2,8) to [out=90, in=-90] (5,12);
	\draw[white, line width=1.8mm] (1,0) to [out=90, in=-90] (4,4) to [out=90, in=-90] (7,8) to [out=90, in=-90] (7,12);
	\draw[line width=0.5mm] (1,0) to [out=90, in=-90] (4,4) to [out=90, in=-90] (7,8) to [out=90, in=-90] (7,12);
	\draw[white, line width=1.8mm] (2,0) to [out=90, in=-90] (5,4) to [out=90, in=-90] (8,8);
	\draw[line width=0.5mm] (2,0) to [out=90, in=-90] (5,4) to [out=90, in=-90] (8,8) to [out=90, in=-90] (8,12);
	\end{scope}
	\begin{scope}[xshift=9.5cm]
	\node (k) at (0,6) {$=$};
	\end{scope}
	\begin{scope}[xshift=10cm]
	\draw [line width=0.5mm] (7,0) to [out=90, in=-90] (4,4) to [out=90, in=-90] (1,8)to [out=90, in=-90] (1,12); 
	\draw[line width=0.5mm] (8,0) to [out=90, in=-90] (5,4) to [out=90, in=-90] (2,8) to [out=90, in=-90] (2,12);
	\draw [white, line width=1.8mm] (4,0) to [out=90, in=-90] (7,4) to [out=90, in=-90] (7,8) to [out=90, in=-90] (4,12); 
	\draw [line width=0.5mm] (4,0) to [out=90, in=-90] (7,4) to [out=90, in=-90] (7,8) to [out=90, in=-90] (4,12);
	\draw[white, line width=1.8mm] (5,0) to [out=90, in=-90] (8,4) to [out=90, in=-90] (8,8) to [out=90, in=-90] (5,12);
	\draw[line width=0.5mm] (5,0) to [out=90, in=-90] (8,4) to [out=90, in=-90] (8,8) to [out=90, in=-90] (5,12);
	\draw[white, line width=1.8mm] (1,0) to [out=90, in=-90] (1,4) to [out=90, in=-90] (4,8) to [out=90, in=-90] (7,12);
	\draw[line width=0.5mm] (1,0) to [out=90, in=-90] (1,4) to [out=90, in=-90] (4,8) to [out=90, in=-90] (7,12);
	\draw[white, line width=1.8mm] (2,0) to [out=90, in=-90] (2,4) to [out=90, in=-90] (5,8) to [out=90, in=-90] (8,12);
	\draw[line width=0.5mm] (2,0) to [out=90, in=-90] (2,4) to [out=90, in=-90] (5,8) to [out=90, in=-90] (8,12);
	\end{scope}
	\begin{scope}[xshift=9.5cm, yshift=-7cm]
	\node (k) at (0,6) {$\widetilde{s}_{j}\widetilde{s}_{j+1}\widetilde{s}_{j}=\widetilde{s}_{j+1}\widetilde{s}_{j}\widetilde{s}_{j+1}$};
	\end{scope}
	\end{tikzpicture}
	\]
	\[
	\begin{tikzpicture}[scale=0.5]
	\begin{scope}
	\draw [line width=0.5mm] (7,0) to [out=90, in=-90] (7,4) to [out=90, in=-90] (4,8)to [out=90, in=-90] (1,12); 
	\draw [white, line width=1.8mm] (4,0) to [out=90, in=-90] (1,4) to [out=90, in=-90] (1,8) to [out=90, in=-90] (4,12); 
	\draw [line width=0.5mm] (4,0) to [out=90, in=-90] (1,4) to [out=90, in=-90] (1,8) to [out=90, in=-90] (4,12); 
	\draw[white, line width=1.8mm] (5,0) to [out=90, in=-90] (2,4) to [out=90, in=-90] (2,8) to [out=90, in=-90] (5,12);
	\draw[line width=0.5mm] (5,0) to [out=90, in=-90] (2,4) to [out=90, in=-90] (2,8) to [out=90, in=-90] (5,12);
	\draw[white, line width=1.8mm] (8,0) to [out=90, in=-90] (8,4) to [out=90, in=-90] (5,8) to [out=90, in=-90] (2,12);
	\draw[line width=0.5mm] (8,0) to [out=90, in=-90] (8,4) to [out=90, in=-90] (5,8) to [out=90, in=-90] (2,12);
	\draw[white, line width=1.8mm] (1,0) to [out=90, in=-90] (4,4) to [out=90, in=-90] (7,8) to [out=90, in=-90] (7,12);
	\draw[line width=0.5mm] (1,0) to [out=90, in=-90] (4,4) to [out=90, in=-90] (7,8) to [out=90, in=-90] (7,12);
	\draw[white, line width=1.8mm] (2,0) to [out=90, in=-90] (5,4) to [out=90, in=-90] (8,8);
	\draw[line width=0.5mm] (2,0) to [out=90, in=-90] (5,4) to [out=90, in=-90] (8,8) to [out=90, in=-90] (8,12);
	\end{scope}
	\begin{scope}[xshift=9.5cm]
	\node (k) at (0,6) {$=$};
	\end{scope}
	\begin{scope}[xshift=10cm]
	\draw [line width=0.5mm] (7,0) to [out=90, in=-90] (4,4) to [out=90, in=-90] (1,8)to [out=90, in=-90] (1,12); 
	\draw [white, line width=1.8mm] (4,0) to [out=90, in=-90] (7,4) to [out=90, in=-90] (7,8) to [out=90, in=-90] (4,12); 
	\draw [line width=0.5mm] (4,0) to [out=90, in=-90] (7,4) to [out=90, in=-90] (7,8) to [out=90, in=-90] (4,12);
	\draw[white, line width=1.8mm] (5,0) to [out=90, in=-90] (8,4) to [out=90, in=-90] (8,8) to [out=90, in=-90] (5,12);
	\draw[line width=0.5mm] (5,0) to [out=90, in=-90] (8,4) to [out=90, in=-90] (8,8) to [out=90, in=-90] (5,12);
	\draw[white, line width=1.8mm] (8,0) to [out=90, in=-90] (5,4) to [out=90, in=-90] (2,8) to [out=90, in=-90] (2,12);
	\draw[line width=0.5mm] (8,0) to [out=90, in=-90] (5,4) to [out=90, in=-90] (2,8) to [out=90, in=-90] (2,12);
	\draw[white, line width=1.8mm] (1,0) to [out=90, in=-90] (1,4) to [out=90, in=-90] (4,8) to [out=90, in=-90] (7,12);
	\draw[line width=0.5mm] (1,0) to [out=90, in=-90] (1,4) to [out=90, in=-90] (4,8) to [out=90, in=-90] (7,12);
	\draw[white, line width=1.8mm] (2,0) to [out=90, in=-90] (2,4) to [out=90, in=-90] (5,8) to [out=90, in=-90] (8,12);
	\draw[line width=0.5mm] (2,0) to [out=90, in=-90] (2,4) to [out=90, in=-90] (5,8) to [out=90, in=-90] (8,12);
	\end{scope}
	\begin{scope}[xshift=9.5cm, yshift=-7cm]
	\node (k) at (0,6) {$\widetilde{s}_{k}\widetilde{s}_{k+1}\widetilde{\sigma}_{k}=\widetilde{\sigma}_{k+1}\widetilde{s}_{k}\widetilde{s}_{k+1}$};
	\end{scope}
	\end{tikzpicture}
	\]
	
	 Finally, (B2), (S3), (M3) are obvious from the far commutativities of $\text{B}_{2n}$.
\end{proof}

Therefore, the motions of pairs of points give rise to a subgroup $\widetilde{\text{LB}}_n$ of $\text{B}_{2n}$ which consists of the braids of double-strands and is generated by $\widetilde{\sigma}_{i}$ and $\widetilde{s}_j$. The following proposition shows $\widetilde{\text{LB}}_n$ is the image of $\text{B}_n$ embeded in $\text{B}_{2n}$ in two ways.  Similar construction appeared in \cite{BH}. Here we aim to get linear representations of $\text{LB}_n$ in terms of braid category data.

However, the symmetric relation (S2) $\widetilde{s}_j^2=1$ and the mixed relation (M2) $\widetilde{\sigma}_{i}\widetilde{\sigma}_{i+1}\widetilde{s}_{i}=\widetilde{s}_{i+1}\widetilde{\sigma}_{i}\widetilde{\sigma}_{i+1}$ are not automatically satisfied. From physical point of view, the symmetric relation means that the pair of points fuse into some bosons or fermions. But this is not true in the 2d topological order where more general anyons other than bosons or fermions may appear after the fusion process. Therefore, the symmetric relation should naturally be a sufficient condition for the existence of $\text{LB}_n$ representations. In fact, we show that (M2) results from the symmetric relation in the next section.

\section{The loop braid group representations}

\textit{Braided tensor categories} (\cite{EGNO}) are rich resource of the braid group representations. These are tensor categories equipped with
a family of natural transformations, called \textit{braidings}. That is, for each pair of objects $x$, $y$ of $\mathcal{C}$,  one has a natural transformation $c_{x,y}\in \text{Hom}(x, y)$ satisfying certain compatibility
 equations that are regarded as the categorical version of Yang-Baxter equation. For an object $x$, 
 $R_i=\text{id}^{\otimes(i-1)}\otimes c_{x,x}\otimes \text{id}^{\otimes(n-i-1)}$
 induces a homomorphism $B_n\rightarrow \text{End}(x^{\otimes n})$ by $\sigma_i\mapsto R_i$.  Applying diagram calculus for braided categories, we have $\widetilde{\sigma}_i$ and $\widetilde{s}_j$ as morphisms in $\mathcal{C}$ expressed in terms of  the braiding $c$. 
 
 Let $x$, $y$ be objects such that the double braiding of $x\otimes y$ is trivial, i.e., $$c_{x\otimes y, x\otimes y}\circ c_{x\otimes y, x\otimes y}=\text{id}_{x\otimes y, x\otimes y}.$$
 This is the symmetric equation (S2) in the categorical setting. Equivalently, it means that the $(x, y)$-labeled double-strands can pass through the crossing in the braid diagrams.
 \[
 \begin{tikzpicture}[scale=0.5]
 \begin{scope}
 \draw[line width=0.5mm] (-2,0)--(1,4); 
 \draw[line width=0.5mm] (-1,0)--(2,4);
 \draw[white, line width=2mm] (2,0)--(-1,4); 
 \draw[white, line width=2mm] (1,0)--(-2,4);
 \draw[line width=0.5mm] (2,0)--(-1,4); 
 \draw[line width=0.5mm] (1,0)--(-2,4);
 \node (x) at (-2,-0.5) {$x$};
 \node (x) at (-1,-0.5) {$y$};
 \node (x) at (1,-0.5) {$x$};
 \node (x) at (2,-0.5) {$y$};
 \end{scope}
 \begin{scope}[xshift=3cm]
 \node (k) at (0,2) {$=$};
 \end{scope}
 \begin{scope}[xshift=6cm]
 \draw[line width=0.5mm] (2,0)--(-1,4); 
 \draw[line width=0.5mm] (1,0)--(-2,4);
 \draw[white, line width=2mm] (-2,0)--(1,4); 
 \draw[white, line width=2mm] (-1,0)--(2,4);
 \draw[line width=0.5mm] (-2,0)--(1,4); 
 \draw[line width=0.5mm] (-1,0)--(2,4);
 \node (x) at (-2,-0.5) {$x$};
 \node (x) at (-1,-0.5) {$y$};
 \node (x) at (1,-0.5) {$x$};
 \node (x) at (2,-0.5) {$y$};
 \end{scope}	 
 \end{tikzpicture}
 \]
 
The following theorem tells that this trivial double braiding suffices to imply the mixed relation $(\text{M2})$ and induce a loop braid group representation. 
 
 \begin{theorem}
 	Let $x$ and $y$ be objects in a braided tensor category $\mathcal{C}$ such that the double braiding of $x\otimes y$ is trivial. Then the braidings $\widetilde{\sigma}_i$ and $\widetilde{s}_j$ induces a homomorphism $\text{LB}_n\rightarrow\text{End}((x\otimes y)^{\otimes n})$.
 \end{theorem}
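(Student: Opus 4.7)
The plan is to verify that the assignment $\sigma_i\mapsto\widetilde{\sigma}_i$, $s_j\mapsto\widetilde{s}_j$ respects all the defining relations (B1)--(M3) of $\text{LB}_n$ once the double-strand braids are evaluated in $\text{End}((x\otimes y)^{\otimes n})$ via the braiding functor of $\mathcal{C}$. Proposition 1 already delivers the braid relations (B1) and (S1), the mixed relation (M1), and all the far-commutativity relations (B2), (S3), (M3), purely at the level of $\text{B}_{2n}$ and hence also after passing to $\text{End}((x\otimes y)^{\otimes n})$. So only (S2) and (M2) require the hypothesis on the double braiding.

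For (S2), I would observe that the four-strand braid $\widetilde{s}_j$, read on strands labeled $x,y,x,y$, is precisely the categorical braiding $c_{x\otimes y,\,x\otimes y}$ acting on the $j$-th and $(j+1)$-th factors of $(x\otimes y)^{\otimes n}$. Consequently $\widetilde{s}_j^{\,2}$ is exactly the double braiding of $x\otimes y$ with itself, which equals the identity by hypothesis. This is a direct restatement of the pass-through diagram displayed right before the theorem statement.

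For (M2), the plan is a diagrammatic argument inside $\text{End}((x\otimes y)^{\otimes n})$. The essential tool is again the pass-through equality: once $c_{x\otimes y,\,x\otimes y}^{\,2}=\text{id}$, a double strand labeled by $x\otimes y$ may cross over or under an adjacent double strand interchangeably, so the $\widetilde{s}$-type and $\widetilde{\sigma}$-type crossings of adjacent $x\otimes y$-strands give the same morphism in $\mathcal{C}$. Starting from the diagram of $\widetilde{\sigma}_i\widetilde{\sigma}_{i+1}\widetilde{s}_i$, I would apply the pass-through identity to convert the $\widetilde{s}_i$-crossings into the opposite type, then use several Reidemeister III (braid) moves of the kind already used in the proof of Proposition 1 to slide these crossings past the $\widetilde{\sigma}$-factors, and finally re-apply the pass-through identity at position $i+1$ to recover the diagram of $\widetilde{s}_{i+1}\widetilde{\sigma}_i\widetilde{\sigma}_{i+1}$.

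The main obstacle is bookkeeping this chain of moves: the identity (M2) does \emph{not} hold in the pure braid group $\text{B}_{2n}$, only after evaluation in $\mathcal{C}$, so each use of the pass-through identity must be justified against the categorical hypothesis rather than against a Reidemeister move on the nose, and one must be careful that the intermediate double-strand crossings involve adjacent $x\otimes y$-pairs (otherwise the hypothesis does not apply). Once (S2) and (M2) are established alongside the relations from Proposition 1, the universal property of $\text{LB}_n$ furnishes the promised homomorphism $\text{LB}_n\to\text{End}((x\otimes y)^{\otimes n})$.
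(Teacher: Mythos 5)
Your overall architecture matches the paper's: Proposition 1 disposes of (B1), (S1), (M1) and the far commutativities, and (S2) is, as you say, literally the hypothesis, since $\widetilde{s}_j$ evaluates to $c_{x\otimes y,\,x\otimes y}$ on adjacent factors and hence $\widetilde{s}_j^{\,2}$ is the double braiding. The gap is in your treatment of (M2). The pass-through identity $c_{x\otimes y,x\otimes y}=c_{x\otimes y,x\otimes y}^{-1}$ identifies only the two \emph{whole-pair} crossings, i.e.\ $\widetilde{s}_j$ with $\widetilde{s}_j^{-1}$; it does \emph{not} identify $\widetilde{\sigma}_i$ with $\widetilde{s}_i$. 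The braid $\widetilde{\sigma}_i$ is the interleaved cabling (one strand of the moving pair passes over, the other under, the strands of the adjacent pair), built from single-strand crossings $c_{x,x}^{\pm1}$, etc., to which the hypothesis does not apply. In the Ising example $\widetilde{s}_i$ is an involution while $\widetilde{\sigma}_i$ is not, and if $\widetilde{\sigma}_i=\widetilde{s}_i$ held, the construction would collapse (every relation would reduce to (B1)/(S1)). So the sentence ``the $\widetilde{s}$-type and $\widetilde{\sigma}$-type crossings of adjacent $x\otimes y$-strands give the same morphism in $\mathcal{C}$'' is false.

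Consequently the move sequence you outline cannot be completed as described. If ``converting $\widetilde{s}_i$ into the opposite type'' means replacing it by $\widetilde{s}_i^{-1}$, and the subsequent sliding uses only Reidemeister III moves (which are identities already in $\text{B}_{2n}$), then the middle step asserts $\widetilde{\sigma}_i\widetilde{\sigma}_{i+1}\widetilde{s}_i^{-1}=\widetilde{s}_{i+1}^{-1}\widetilde{\sigma}_i\widetilde{\sigma}_{i+1}$ in $\text{B}_{2n}$; but in any group this is equivalent to $\widetilde{\sigma}_i\widetilde{\sigma}_{i+1}\widetilde{s}_i=\widetilde{s}_{i+1}\widetilde{\sigma}_i\widetilde{\sigma}_{i+1}$, i.e.\ to (M2) itself, which the paper explicitly notes fails in $\text{B}_{2n}$. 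The paper's actual route is Lemma 1: one first proves the standardness identity $\widetilde{\sigma}_i\widetilde{\sigma}_{i+1}=\widetilde{s}_i\widetilde{s}_{i+1}$ by a diagram calculation in which the doubled $(x,y)$-cable is pushed through crossings \emph{in the interior} of the $\widetilde{\sigma}_i\widetilde{\sigma}_{i+1}$ diagram (two applications of the pass-through identity at interior crossings, interleaved with planar isotopies); (M2) then follows algebraically, since $\widetilde{\sigma}_i\widetilde{\sigma}_{i+1}\widetilde{s}_i=\widetilde{s}_i\widetilde{s}_{i+1}\widetilde{s}_i=\widetilde{s}_{i+1}\widetilde{s}_i\widetilde{s}_{i+1}=\widetilde{s}_{i+1}\widetilde{\sigma}_i\widetilde{\sigma}_{i+1}$ by (S1). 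Some such intermediate identity is indispensable; the two ``boundary'' applications of pass-through in your plan are not enough.
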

 
 This is the corollary of Proposition 1 and Lemma 1 below which implies the mixed relation (M2). Recall that in \cite{BCHPRS} the notion of \textit{standard extension} is introduced for the representation of $\text{LB}_3$ with $\sigma_1\sigma_2=ks_1s_2$ for some constant $k$. Now Lemma 1 asserts that the $\text{LB}_3$ representations in Theorem 1 are all standard. 
 
 \begin{lemma}
 	If $x\otimes y$ has trivial double braiding, then $\widetilde{\sigma}_i\widetilde{\sigma}_{i+1}=\widetilde{s}_i\widetilde{s}_{i+1}$
 \end{lemma}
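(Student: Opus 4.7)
The plan is to compare both sides as braids in $B_{2n}$ and then invoke the trivial double braiding hypothesis as a local rewrite rule. Reading the over/under pattern off the pictures defining $\widetilde{\sigma}_i$ and $\widetilde{s}_j$, one first records the braid words
\[
\widetilde{\sigma}_i = e_{2i}\, e_{2i-1}\, e_{2i+1}^{-1}\, e_{2i}^{-1}, \qquad \widetilde{s}_j = e_{2j}^{-1}\, e_{2j-1}^{-1}\, e_{2j+1}^{-1}\, e_{2j}^{-1},
\]
where $e_1,\ldots,e_{2n-1}$ denote the standard generators of $B_{2n}$. The key observation is that $\widetilde{s}_j$ is precisely the ``all-negative'' four-crossing diagram, i.e.\ it realizes $c_{V,V}^{-1}$ with $V = x\otimes y$. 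The hypothesis $c_{V,V}^{2} = \mathrm{id}$ is exactly the equation drawn immediately before the theorem: it identifies this all-negative diagram with the all-positive one $c_{V,V}$. In particular $\widetilde{s}_i\widetilde{s}_{i+1}$ coincides with the image of $\sigma_i\sigma_{i+1}\in B_3$ under the $B_3$-representation induced by the big braiding $c_{V,V}$ on $V^{\otimes 3}$.

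The heart of the argument is therefore to show that $\widetilde{\sigma}_i\,\widetilde{\sigma}_{i+1}$ represents this same morphism on $V^{\otimes 3}$. Concatenating the two words and applying Yang--Baxter $e_k e_{k+1} e_k = e_{k+1} e_k e_{k+1}$ and distant commutation, I would first use the relation between $e_{2i}^{-1} e_{2i+2} e_{2i+1}$ and $e_{2i+1} e_{2i}^{-1} e_{2i+2}$-type rearrangements to pull the central block into a form where the four crossings lying on the strands of two adjacent $V$-pairs all have matching sign. The trivial double braiding picture then licenses flipping that four-crossing block to its sign-opposite; after a few further R3 moves, what remains is the all-negative diagram representing $\widetilde{s}_i\widetilde{s}_{i+1}$.

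The main obstacle is the diagrammatic bookkeeping: because $\widetilde{\sigma}_i$ and $\widetilde{\sigma}_{i+1}$ share the middle strands at positions $2i+1, 2i+2$, the two mixed-sign blocks overlap, and one must perform several Reidemeister III moves on the six-strand composite before a localizable ``$V$-$V$ double braid'' sub-pattern of uniform sign is exposed. I would organize the write-up pictorially, drawing the composite $\widetilde{\sigma}_i\widetilde{\sigma}_{i+1}$ on six strands, highlighting the sub-diagram to which the hypothesis is applied, and then isotoping the resulting diagram down to $\widetilde{s}_i\widetilde{s}_{i+1}$. Once this is done, relation (M2) follows from the lemma together with the braid relation (S1) already established in Proposition 1, since $\widetilde{\sigma}_i\widetilde{\sigma}_{i+1}\widetilde{s}_i = \widetilde{s}_i\widetilde{s}_{i+1}\widetilde{s}_i = \widetilde{s}_{i+1}\widetilde{s}_i\widetilde{s}_{i+1} = \widetilde{s}_{i+1}\widetilde{\sigma}_i\widetilde{\sigma}_{i+1}$, completing the proof of Theorem 1.
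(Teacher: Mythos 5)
Your reading of the generators is right: $\widetilde{s}_j$ is the sign-uniform four-crossing block realizing the full braiding of $V=x\otimes y$ with itself, the hypothesis says exactly that this block equals its mirror image, and your closing observation that (M2) then follows from the Lemma together with (S1) via $\widetilde{\sigma}_i\widetilde{\sigma}_{i+1}\widetilde{s}_i=\widetilde{s}_i\widetilde{s}_{i+1}\widetilde{s}_i=\widetilde{s}_{i+1}\widetilde{s}_i\widetilde{s}_{i+1}=\widetilde{s}_{i+1}\widetilde{\sigma}_i\widetilde{\sigma}_{i+1}$ is precisely how the paper deduces Theorem~1. The problem is the heart of your argument, which you defer as ``diagrammatic bookkeeping'': the plan of isotoping $\widetilde{\sigma}_i\widetilde{\sigma}_{i+1}$ until a sign-uniform $V$--$V$ block is exposed and then flipping that block cannot be completed, for an invariant-theoretic reason. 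For two strands $u,v$ of the $2n$-strand diagram let $e(u,v)$ be the sum of the signs of their crossings. Reidemeister II/III moves preserve every $e(u,v)$, while flipping a sign-uniform block between two $(x,y)$-pairs changes the one $x$--$x$ count and the one $y$--$y$ count occurring in that block by the \emph{same} amount $\pm 2$. Hence $\sum e(x\text{-strand},x\text{-strand})-\sum e(y\text{-strand},y\text{-strand})$ is unchanged by every move you allow yourself. But in $\widetilde{\sigma}_i$ the two strands of the incoming pair cross the other pair with opposite signs (one passes under both strands, the other over both), so this quantity equals $\pm 4$ for $\widetilde{\sigma}_i\widetilde{\sigma}_{i+1}$, whereas it vanishes for $\widetilde{s}_i\widetilde{s}_{i+1}$, where all crossings between interacting pairs carry the same sign. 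So no sequence of planar isotopies and in-place flips of uniform blocks carries one diagram to the other.

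The paper's own proof is not of this restricted form. It treats an entire $(x,y)$-pair as a single double line and \emph{pushes that double line through the crossings} of the six-strand diagram, i.e.\ it moves the pair as one object $V$ past other strands using naturality of the braiding and the hexagon, invoking the trivial double braiding at the two stages where the double line must change sides relative to another $V$-pair; this changes the individual crossing signs asymmetrically in exactly the way your move set cannot. So the missing idea is not bookkeeping but the categorical slide of the whole pair as a unit: you need to re-found the middle of your argument on that move (or on an explicit computation of $\widetilde{\sigma}_i\widetilde{\sigma}_{i+1}$ as a morphism, via the hexagon, showing it collapses to $c_{V,V}^{\mp 2}$ times $\widetilde{s}_i\widetilde{s}_{i+1}$) rather than on flipping uniform blocks inside a braid diagram.
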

 \begin{proof}
 	It can be verified by the following diagram calculus. 
 	\[
 	\begin{tikzpicture}[scale=0.5]
 	\begin{scope}
 	\draw [line width=0.5mm] (4,0) to [out=90, in=-90] (1,4) to [out=90, in=-90] (1,8); 
 	\draw [line width=0.5mm] (7,0) to [out=90, in=-90] (7,4) to [out=90, in=-90] (4,8); 
 	\draw[white, line width=1.8mm] (1,0) to [out=90, in=-90] (4,4) to [out=90, in=-90] (7,8);
 	\draw[line width=0.5mm] (1,0) to [out=90, in=-90] (4,4) to [out=90, in=-90] (7,8); 
 	\draw[white, line width=1.8mm] (2,0) to [out=90, in=-90] (5,4) to [out=90, in=-90] (8,8);
 	\draw[line width=0.5mm] (2,0) to [out=90, in=-90] (5,4) to [out=90, in=-90] (8,8);
 	\draw[white, line width=1.8mm] (5,0) to [out=90, in=-90] (2,4) to [out=90, in=-90] (2,8);
 	\draw[line width=0.5mm] (5,0) to [out=90, in=-90] (2,4) to [out=90, in=-90] (2,8);
 	\draw[white, line width=1.8mm] (8,0) to [out=90, in=-90] (8,4) to [out=90, in=-90] (5,8);
 	\draw[line width=0.5mm] (8,0) to [out=90, in=-90] (8,4) to [out=90, in=-90] (5,8);
 	\node (x) at (1,-0.5) {$x$};
 	\node (x) at (2,-0.5) {$y$};
 	\node (x) at (4,-0.5) {$x$};
 	\node (x) at (5,-0.5) {$y$};
 	\node (x) at (7,-0.5) {$x$};
 	\node (x) at (8,-0.5) {$y$};
 	\end{scope}
 	\begin{scope}[xshift=9.5cm]
 	\node (k) at (0,4) {$=$};
 	\end{scope}
 	\begin{scope}[xshift=10cm]
 	\draw [line width=0.5mm] (4,0) to [out=90, in=-90] (1,4) to [out=90, in=-90] (1,8); 
 	\draw [line width=0.5mm] (7,0) to [out=90, in=-90] (7,3) to [out=90, in=0] (5,6); 
 	\draw[white, line width=1.8mm] (1,0) to [out=90, in=-90] (7,8);
 	\draw[line width=0.5mm] (1,0) to [out=90, in=-90] (7,8); 
 	\draw[white, line width=1.8mm] (2,0) to [out=90, in=-90] (8,8);
 	\draw[line width=0.5mm] (2,0) to [out=90, in=-90] (8,8);
 	\draw[white, line width=1.8mm] (5,0) to [out=90, in=-90] (5,1) to [out=90, in=-90] (2,8);
 	\draw[line width=0.5mm] (5,0) to [out=90, in=-90] (5,1) to [out=90, in=-90] (2,8);
 	\draw [white, line width=1.8mm] (5,6) to [out=180, in=0] (5,2.5) to [out=180, in=-90] (3,5) to [out=90, in=-90] (4,8); 
 	\draw [line width=0.5mm] (5,6) to [out=180, in=0] (5,2.5) to [out=180, in=-90] (3,5) to [out=90, in=-90] (4,8);
 	\draw[white, line width=1.8mm] (8,0) to [out=90, in=-90] (8,3.7) to [out=90, in=-90] (5,8);
 	\draw[line width=0.5mm] (8,0) to [out=90, in=-90] (8,3.7) to [out=90, in=-90] (5,8);
 	\node (x) at (1,-0.5) {$x$};
 	\node (x) at (2,-0.5) {$y$};
 	\node (x) at (4,-0.5) {$x$};
 	\node (x) at (5,-0.5) {$y$};
 	\node (x) at (7,-0.5) {$x$};
 	\node (x) at (8,-0.5) {$y$};
 	\end{scope}
 	\begin{scope}[xshift=19.5cm]
 	\node (k) at (0,4) {$=$};
 	\end{scope}
 	\begin{scope}[xshift=20cm]
 	\draw [line width=0.5mm] (4,0) to [out=90, in=-90] (1,4) to [out=90, in=-90] (1,8); 
 	\draw[white, line width=1.8mm] (5,0) to [out=90, in=-90] (5,1) to [out=90, in=-90] (2,8);
 	\draw[line width=0.5mm] (5,0) to [out=90, in=-90] (5,1) to [out=90, in=-90] (2,8);
 	\draw [white, line width=1.8mm] (5,2.5) to [out=180, in=-90] (3,5) to [out=90, in=-90] (4,8); 
 	\draw [line width=0.5mm] (5,6) to [out=180, in=0] (5,2.5) to [out=180, in=-90] (3,5) to [out=90, in=-90] (4,8);
 	\draw [line width=0.5mm] (7,0) to [out=90, in=-90] (7,3) to [out=90, in=0] (5,6); 
 	\draw[white, line width=1.8mm] (1,0) to [out=90, in=-90] (7,8);
 	\draw[line width=0.5mm] (1,0) to [out=90, in=-90] (7,8); 
 	\draw[white, line width=1.8mm] (2,0) to [out=90, in=-90] (8,8);
 	\draw[line width=0.5mm] (2,0) to [out=90, in=-90] (8,8);
 	\draw [white, line width=1.8mm] (5,6) to [out=180, in=0] (5,2.5); 
 	\draw [line width=0.5mm] (5,6) to [out=180, in=0] (5,2.5);
 	\draw[white, line width=1.8mm] (8,0) to [out=90, in=-90] (8,3.7) to [out=90, in=-90] (5,8);
 	\draw[line width=0.5mm] (8,0) to [out=90, in=-90] (8,3.7) to [out=90, in=-90] (5,8);
 	\node (x) at (1,-0.5) {$x$};
 	\node (x) at (2,-0.5) {$y$};
 	\node (x) at (4,-0.5) {$x$};
 	\node (x) at (5,-0.5) {$y$};
 	\node (x) at (7,-0.5) {$x$};
 	\node (x) at (8,-0.5) {$y$};
 	\end{scope}
 	\begin{scope}[xshift=-0.5cm, yshift=-10cm]
 	\node (k) at (0,4) {$=$};
 	\end{scope}
 	\begin{scope}[xshift=0cm, yshift=-10cm]
 	\draw [line width=0.5mm] (4,0) to [out=90, in=-90] (1,4) to [out=90, in=-90] (1,8); 
 	\draw[line width=0.5mm] (5,0) to [out=90, in=-90] (2,4) to [out=90, in=-90] (2,8);
 	\draw [white, line width=1.8mm] (5.5,2.5) to [out=180, in=-90] (3,5) to [out=90, in=-90] (4,8); 
 	\draw [line width=0.5mm] (5.5,2.5) to [out=180, in=-90] (3,5) to [out=90, in=-90] (4,8);
 	\draw [line width=0.5mm] (7,0) to [out=90, in=-90] (7,1) to [out=90, in=-110] (4.7,6.5); 
 	\draw[white, line width=1.8mm] (1,0) to [out=90, in=-90] (7,8);
 	\draw[line width=0.5mm] (1,0) to [out=90, in=-90] (7,8); 
 	\draw[white, line width=1.8mm] (2,0) to [out=90, in=-90] (8,8);
 	\draw[line width=0.5mm] (2,0) to [out=90, in=-90] (8,8);
 	\draw [white, line width=1.8mm] (4.7,6.5) to [out=90, in=90] (4,6.5) to [out=-90, in=90] (6.5,3.8) to [out=-90, in=0] (5.5,2.5); 
 	\draw [line width=0.5mm] (4.7,6.5) to [out=80, in=90] (4,6.5) to [out=-90, in=90] (6.5,3.8) to [out=-90, in=0] (5.5,2.5);
 	\draw[white, line width=1.8mm] (8,0) to [out=90, in=-90] (8,2) to [out=90, in=-90] (5,8);
 	\draw[line width=0.5mm] (8,0) to [out=90, in=-90] (8,2) to [out=90, in=-90] (5,8);
 	\node (x) at (1,-0.5) {$x$};
 	\node (x) at (2,-0.5) {$y$};
 	\node (x) at (4,-0.5) {$x$};
 	\node (x) at (5,-0.5) {$y$};
 	\node (x) at (7,-0.5) {$x$};
 	\node (x) at (8,-0.5) {$y$};
 	\end{scope}
 	\begin{scope}[xshift=9.5cm, yshift=-10cm]
 	\node (k) at (0,4) {$=$};
 	\end{scope}
 	\begin{scope}[xshift=10cm, yshift=-10cm]
 	\draw [line width=0.5mm] (4,0) to [out=90, in=-90] (1,4) to [out=90, in=-90] (1,8); 
 	\draw[line width=0.5mm] (5,0) to [out=90, in=-90] (2,4) to [out=90, in=-90] (2,8);
 	\draw [white, line width=1.8mm] (5.5,2.5) to [out=180, in=-90] (3,5) to [out=90, in=-90] (4,8); 
 	\draw [line width=0.5mm] (5.5,2.5) to [out=180, in=-90] (3,5) to [out=90, in=-90] (4,8);
 	\draw [line width=0.5mm] (7,0) to [out=90, in=-90] (7,1) to [out=90, in=-110] (4.7,6.5); 
 	\draw [white, line width=1.8mm] (4.7,6.5) to [out=90, in=90] (4,6.5) to [out=-90, in=90] (6.5,3.8) to [out=-90, in=0] (5.5,2.5); 
 	\draw [line width=0.5mm] (4.7,6.5) to [out=80, in=90] (4,6.5) to [out=-90, in=90] (6.5,3.8) to [out=-90, in=0] (5.5,2.5);
 	\draw[white, line width=1.8mm] (8,0) to [out=90, in=-90] (8,2) to [out=90, in=-90] (5,8);
 	\draw[line width=0.5mm] (8,0) to [out=90, in=-90] (8,2) to [out=90, in=-90] (5,8);
 	\draw[white, line width=1.8mm] (1,0) to [out=90, in=-90] (7,8);
 	\draw[line width=0.5mm] (1,0) to [out=90, in=-90] (7,8); 
 	\draw[white, line width=1.8mm] (2,0) to [out=90, in=-90] (8,8);
 	\draw[line width=0.5mm] (2,0) to [out=90, in=-90] (8,8);
 	\node (x) at (1,-0.5) {$x$};
 	\node (x) at (2,-0.5) {$y$};
 	\node (x) at (4,-0.5) {$x$};
 	\node (x) at (5,-0.5) {$y$};
 	\node (x) at (7,-0.5) {$x$};
 	\node (x) at (8,-0.5) {$y$};
 	\end{scope}
 	\begin{scope}[xshift=19.5cm, yshift=-10cm]
 	\node (k) at (0,4) {$=$};
 	\end{scope}
 	\begin{scope}[xshift=20cm, yshift=-10cm]
 	\draw [line width=0.5mm] (4,0) to [out=90, in=-90] (1,4) to [out=90, in=-90] (1,8); 
 	\draw[line width=0.5mm] (5,0) to [out=90, in=-90] (2,4) to [out=90, in=-90] (2,8);
 	\draw [line width=0.5mm] (7,0) to [out=90, in=-90] (7,3) to [out=90, in=0] (5,6); 
 	\draw [white, line width=1.8mm] (5,6) to [out=180, in=0] (5,2.5) to [out=180, in=-90] (3,5) to [out=90, in=-90] (4,8); 
 	\draw [line width=0.5mm] (5,6) to [out=180, in=0] (5,2.5) to [out=180, in=-90] (3,5) to [out=90, in=-90] (4,8);
 	\draw[white, line width=1.8mm] (8,0) to [out=90, in=-90] (8,3.7) to [out=90, in=-90] (5,8);
 	\draw[line width=0.5mm] (8,0) to [out=90, in=-90] (8,3.7) to [out=90, in=-90] (5,8);
 	\draw[white, line width=1.8mm] (1,0) to [out=90, in=-90] (7,8);
 	\draw[line width=0.5mm] (1,0) to [out=90, in=-90] (7,8); 
 	\draw[white, line width=1.8mm] (2,0) to [out=90, in=-90] (8,8);
 	\draw[line width=0.5mm] (2,0) to [out=90, in=-90] (8,8);
 	\node (x) at (1,-0.5) {$x$};
 	\node (x) at (2,-0.5) {$y$};
 	\node (x) at (4,-0.5) {$x$};
 	\node (x) at (5,-0.5) {$y$};
 	\node (x) at (7,-0.5) {$x$};
 	\node (x) at (8,-0.5) {$y$};
 	\end{scope}
 	\end{tikzpicture}
 	\]
 	The last diagram is actually equal to $\widetilde{s}_i\widetilde{s}_{i+1}$. In the second and fourth equalities, we push the double lines through the crossings using the symetric relation. 
 \end{proof}

 In order to compute  $\text{End}((x\otimes y)^{\otimes n})$ explicitly, we assume $\mathcal{C}$ is a \textit{ribbon fusion category}  (\cite{EGNO}). In this setting, the objects decompose into direct sum of simple objects.  Then for any simple object $z$, $\text{Hom}(z, (x\otimes y)^{\otimes n})$ is a linear representation of $\text{LB}_n$. By fusion rules, $x\otimes y=\oplus z_i$ for some simple objects $z_i$'s. Suppose each $z_i$ is bosonic or fermionic, then $x\otimes y$ has trivial double braiding. Recall that an object $z$ is bosonic/fermionic if $z\otimes z^*=\mathbf{1}$ and its twist or topological spin is $+1/-1$.  The followings are examples along this line.
 
 ~\\
\textbf{ Example 1.} Ising category.
 Let us consider the Ising category whose simple objects are $\mathbf{1}$, $\sigma$ and $\psi$ with twists $1$, $e^{2\pi i/16}$, and $-1$, respectively.  Its fusion rules are $\sigma\otimes\sigma=\mathbf{1}\oplus\psi$, $\psi\otimes\psi=\mathbf{1}$ and $\psi\otimes\sigma=\sigma\otimes\psi=\sigma$. Therefore,  $\text{Hom}(1, (\sigma\otimes\sigma)^{\otimes n})$ and $\text{Hom}(\psi, (\sigma\otimes\sigma)^{\otimes n})$ are both $\text{LB}_n$ representations of dimension $2^{n-1}$.
 
 ~\\
\textbf{Example 2.} Tambara-Yamagami categories.  \cite{TY} Given a finite abelian group $G=\{g_i\}$, the simple objects of the TY category $\mathcal{TY}(G)$ are $g_i$'s together with $m$. Its fusion rules are group multiplication and $mg=gm=m$, $m\otimes m=\oplus g_i$. It turns out in \cite{Siehler} that $\mathcal{TY}(G)$ is braided if and only if $G$ is an elementary abelian 2-group. That is, $g_i^2=1$ for any element $g_i$ in $G$ and so $g_i$'s twists are all $\pm 1$. Therefore, by the last fusion rule, $\text{Hom}(m,(m\otimes m)^{\otimes n})$ gives a $LB_n$ representation.

  ~\\
\hspace*{0.3cm} In the rest of this section, we apply graph calculus to derive formulas for computing the general representing matrices. 
 For simple objects $x$, $y$ and $z$, a fusion basis for $\text{Hom}(z, (x\otimes y)^{\otimes n})$ is chosen as
 
 \[
 \begin{tikzpicture}[scale=0.4]
 \begin{scope}
 \draw [line width=0.5mm] (0,0) to (5,-5); 
 \draw [dashed, line width=0.5mm] (5,-5) to (6.5,-6.5); 
 \draw [line width=0.5mm] (6.5,-6.5) to (7,-7); 
 \draw [line width=0.5mm] (0,0) to (-1,1); 
 \draw [line width=0.5mm] (0,0) to (1,1); 
 \draw [line width=0.5mm] (2,-2) to (5,1); 
 \draw [line width=0.5mm] (4,0) to (3,1); 
 \draw [line width=0.5mm] (4,-4) to (9,1); 
 \draw [line width=0.5mm] (8,0) to (7,1); 
 \node (k) at (-1,1.6) {\Large $x$};
 \node (k) at (1,1.6) {\Large $y$};
 \node (k) at (3,1.6) {\Large $x$};
 \node (k) at (5,1.6) {\Large $y$};
 \node (k) at (7,1.6) {\Large $x$};
 \node (k) at (9,1.6) {\Large $y$};
 \node (k) at (0.3,-1.3) {\Large $a_1$};
 \node (k) at (3.9,-1.3) {\Large $a_2$};
 \node (k) at (7.9,-1.3) {\Large $a_3$};
 \node (k) at (2.3,-3.3) {\Large $b_2$};
 \node (k) at (4.3,-5.3) {\Large $b_3$};
 \node (k) at (7,-7.5) {\Large $z$};
 \end{scope}
 \end{tikzpicture}
 \]	
 
Here we only consider multiplicity-free fusion rules for simplicity. The general case is completely similar. The $\text{LB}_n$ will be calculated in terms of  the $F$-symbols and $R$-symbols as follows. They are the matrix entries representing the categorical associtivity and commutativity.
  
  \[
  \begin{tikzpicture}[scale=0.5]
  \begin{scope}[xshift=-6cm, yshift=2cm, scale=0.6]
  \begin{scope}[xshift=-12cm]
  \draw [line width=0.5mm] (1,-1) to (5,-5); 
  \draw [ line width=0.5mm] (5,-5) to (6.5,-6.5); 
  \draw [ line width=0.5mm] (1,-1) to (0,0); 
  \draw [line width=0.5mm] (2,-2) to (4,0); 
  \draw [line width=0.5mm] (4,-4) to (8,0); 
  \node (k) at (4,0.6) {\Large $b$};
  \node (k) at (8,0.6) {\Large $c$};
  \node (k) at (0,0.6) {\Large $a$};
  \node (k) at (2.4,-3.5) {\Large $m$};
  \node (k) at (6.5,-7) {\Large $z$};
  \end{scope}
  \node (k) at (0,-3) {$=\sum\limits_{n}F^{abc}_{z;nm}$};
  \begin{scope}[xshift=4cm]
  \draw [line width=0.5mm] (1,-1) to (5,-5); 
  \draw [ line width=0.5mm] (5,-5) to (6.5,-6.5); 
  \draw [ line width=0.5mm] (1,-1) to (0,0); 
  \draw [line width=0.5mm] (6,-2) to (4,0); 
  \draw [line width=0.5mm] (4,-4) to (8,0); 
  \node (k) at (4,0.6) {\Large $b$};
  \node (k) at (8,0.6) {\Large $c$};
  \node (k) at (0,0.6) {\Large $a$};
  \node (k) at (5.6,-3.5) {\Large $n$};
  \node (k) at (6.5,-7) {\Large $z$};
  \end{scope}
  \end{scope}
  \begin{scope}[xshift=3cm, scale=0.6]
  \node (k) at (0,0) {$\large ,$};
  \end{scope}
  \begin{scope}[xshift=8cm]
  \begin{scope}[xshift=-2.5cm]
  \draw[line width=0.5mm] (0,0)--(0,-2); 
  \draw [line width=0.5mm] (0,0) to [out=45, in=-45] (-1,2); 
  \draw [white, line width=2mm] (0,0) to [out=160, in=-160] (1,2); 
  \draw [line width=0.5mm] (0,0) to [out=160, in=-160] (1,2); 
  \node (x) at (0,-2.5) {\Large $z$};
  \node (x) at (-1,2.5) {\Large $a$};
  \node (x) at (1,2.5) {\Large $b$};
  \end{scope}
  \begin{scope}
  \node (k) at (0,0) {$=R^{ab}_z$};
  \end{scope}
  \begin{scope}[xshift=2.5cm]
  \draw[line width=0.5mm] (0,0)--(0,-2); 
  \draw [line width=0.5mm] (0,0) to (1,2); 
  \draw [line width=0.5mm] (0,0) to (-1,2); 
  \node (x) at (0,-2.5) {\Large $z$};
  \node (x) at (-1,2.5) {\Large $a$};
  \node (x) at (1,2.5) {\Large $b$};
  \end{scope}
  \end{scope}	 
  \end{tikzpicture}
  \]

 As a result,  The action of $\widetilde{\sigma}_{i}$ and $\widetilde{s}_j$ on the basis vectors are given by

\[
\begin{tikzpicture}[scale=0.5]
\begin{scope}[xshift=-2cm]
\node (k) at (0,-1) {\Large $\widetilde{s}_j$};
\node (<) at (0.8,-1) {$\Bigg($};
\node (>) at (10.8,-1) {$\Bigg)$};
\end{scope}
\begin{scope}
\draw [line width=0.5mm] (1,-1) to (5,-5); 
\draw [dashed, line width=0.5mm] (5,-5) to (6.5,-6.5); 
\draw [dashed, line width=0.5mm] (1,-1) to (0,0); 
\draw [line width=0.5mm] (2,-2) to (3,-1); 
\draw [line width=0.5mm] (3,-1) to [out=45, in=-90] (4,0); 
\draw [line width=0.5mm] (3,-1) to [out=135, in=-90] (2,0); 
\draw [line width=0.5mm] (4,-4) to (7,-1); 
\draw [line width=0.5mm] (7,-1) to [out=45, in=-90] (8,0); 
\draw [line width=0.5mm] (7,-1) to [out=135, in=-90] (6,0); 
\draw[line width=0.5mm] (6,0) to [out=90, in=-90] (2,4); 
\draw[line width=0.5mm] (8,0) to [out=90, in=-90] (4,4); 
\draw[white, line width=2mm] (2,0) to [out=90, in=-90] (6,4); 
\draw[line width=0.5mm] (2,0) to [out=90, in=-90] (6,4); 
\draw[white, line width=2mm] (4,0) to [out=90, in=-90] (8,4); 
\draw[line width=0.5mm] (4,0) to [out=90, in=-90] (8,4); 
\node (k) at (2,4.6) {\Large $x$};
\node (k) at (4,4.6) {\Large $y$};
\node (k) at (6,4.6) {\Large $x$};
\node (k) at (8,4.6) {\Large $y$};
\node (k) at (0.4,-1.5) {\Large $b_{j-1}$};
\node (k) at (3,-2) {\Large $a_{j}$};
\node (k) at (7,-2.6) {\Large $a_{j+1}$};
\node (k) at (2.4,-3.5) {\Large $b_{j}$};
\node (k) at (4.2,-5.3) {\Large $b_{j+1}$};
\end{scope}
\begin{scope}[xshift=9.5cm]
\node (k) at (0.4,-1) {$=$};
\end{scope}
\begin{scope}[xshift=11.5cm]
\draw [line width=0.5mm] (1,-1) to (5,-5); 
\draw [dashed, line width=0.5mm] (5,-5) to (6.5,-6.5); 
\draw [dashed, line width=0.5mm] (1,-1) to (0,0); 
\draw [line width=0.5mm] (2,-2) to [out=45, in=-90] (3,-1); 
\draw [line width=0.5mm] (3,2.5) to [out=45, in=-90] (4,4); 
\draw [line width=0.5mm] (3,2.5) to [out=135, in=-90] (2,4); 
\draw [line width=0.5mm] (4,-4) to  [out=45, in=-90] (7,-1); 
\draw [line width=0.5mm] (7,2.5) to [out=45, in=-90] (8,4); 
\draw [line width=0.5mm] (7,2.5) to [out=135, in=-90] (6,4); 
\draw[line width=0.5mm] (7,-1) to [out=90, in=-90] (3,2.5); 
\draw[white, line width=2mm] (3,-1) to [out=90, in=-90] (7,2.5); 
\draw[line width=0.5mm] (3,-1) to [out=90, in=-90] (7,2.5); 
\node (k) at (2,4.6) {\Large $x$};
\node (k) at (4,4.6) {\Large $y$};
\node (k) at (6,4.6) {\Large $x$};
\node (k) at (8,4.6) {\Large $y$};
\node (k) at (0.4,-1.5) {\Large $b_{j-1}$};
\node (k) at (3.7,-1.5) {\Large $a_{j}$};
\node (k) at (8.2,-1.5) {\Large $a_{j+1}$};
\node (k) at (2.4,-3.5) {\Large $b_{j}$};
\node (k) at (4.2,-5.3) {\Large $b_{j+1}$};
\end{scope}
\begin{scope}[xshift=5.5cm, yshift=-12cm]
\node (k) at (0,-1) {\Large $=\sum\limits_{c_j,b'_j}F^{b_{j-1}a_{j}a_{j+1}}_{b_{j+1};c_jb_j}R^{a_{j+1}a_{j}}_{c_{j}}\bar{F}^{b_{j-1}a_{j+1}a_{j}}_{b_{j+1};b'_jc_j}$};
\node (k) at (18,-1) {\Large ,};
\end{scope}
\begin{scope}[xshift=14.5cm, yshift=-9cm]
\draw [line width=0.5mm] (1,-1) to (5,-5); 
\draw [dashed, line width=0.5mm] (5,-5) to (6.5,-6.5); 
\draw [dashed, line width=0.5mm] (1,-1) to (0,0); 
\draw [line width=0.5mm] (2,-2) to (3,-1); 
\draw [line width=0.5mm] (3,-1) to [out=45, in=-90] (4,0); 
\draw [line width=0.5mm] (3,-1) to [out=135, in=-90] (2,0); 
\draw [line width=0.5mm] (4,-4) to (7,-1); 
\draw [line width=0.5mm] (7,-1) to [out=45, in=-90] (8,0); 
\draw [line width=0.5mm] (7,-1) to [out=135, in=-90] (6,0); 
\node (k) at (2,0.6) {\Large $x$};
\node (k) at (4,0.6) {\Large $y$};
\node (k) at (6,0.6) {\Large $x$};
\node (k) at (8,0.6) {\Large $y$};
\node (k) at (0.4,-1.5) {\Large $b_{j-1}$};
\node (k) at (3.7,-1.8) {\Large $a_{j+1}$};
\node (k) at (6.3,-2.8) {\Large $a_{j}$};
\node (k) at (2.4,-3.5) {\Large $b'_{j}$};
\node (k) at (4.2,-5.3) {\Large $b_{j+1}$};
\end{scope}
\end{tikzpicture}
\]	

\[
\begin{tikzpicture}[scale=0.5]
\begin{scope}[xshift=-2cm]
\node (k) at (0,-1) {\Large $\widetilde{\sigma}_i$};
\node (<) at (0.8,-1) {$\Bigg($};
\node (>) at (10.8,-1) {$\Bigg)$};
\end{scope}
\begin{scope}
\draw [line width=0.5mm] (1,-1) to (5,-5); 
\draw [dashed, line width=0.5mm] (5,-5) to (6.5,-6.5); 
\draw [dashed, line width=0.5mm] (1,-1) to (0,0); 
\draw [line width=0.5mm] (2,-2) to (3,-1); 
\draw [line width=0.5mm] (3,-1) to [out=45, in=-90] (4,0); 
\draw [line width=0.5mm] (3,-1) to [out=135, in=-90] (2,0); 
\draw [line width=0.5mm] (4,-4) to (7,-1); 
\draw [line width=0.5mm] (7,-1) to [out=45, in=-90] (8,0); 
\draw [line width=0.5mm] (7,-1) to [out=135, in=-90] (6,0); 
\draw[line width=0.5mm] (6,0) to [out=90, in=-90] (2,4); 
\draw[white, line width=2mm] (2,0) to [out=90, in=-90] (6,4); 
\draw[line width=0.5mm] (2,0) to [out=90, in=-90] (6,4); 
\draw[white, line width=2mm] (4,0) to [out=90, in=-90] (8,4); 
\draw[line width=0.5mm] (4,0) to [out=90, in=-90] (8,4); 
\draw[white, line width=2mm] (8,0) to [out=90, in=-90] (4,4); 
\draw[line width=0.5mm] (8,0) to [out=90, in=-90] (4,4); 
\node (k) at (2,4.6) {\Large $x$};
\node (k) at (4,4.6) {\Large $y$};
\node (k) at (6,4.6) {\Large $x$};
\node (k) at (8,4.6) {\Large $y$};
\node (k) at (0.4,-1.5) {\Large $b_{i-1}$};
\node (k) at (3,-2) {\Large $a_{i}$};
\node (k) at (7,-2.6) {\Large $a_{i+1}$};
\node (k) at (2.4,-3.5) {\Large $b_{i}$};
\node (k) at (4.2,-5.3) {\Large $b_{i+1}$};
\end{scope}
\begin{scope}[xshift=9.5cm]
\node (k) at (0.4,-1) {$=$};
\end{scope}
\begin{scope}[xshift=11.5cm]
\draw [line width=0.5mm] (1,-1) to (5,-5); 
\draw [dashed, line width=0.5mm] (5,-5) to (6.5,-6.5); 
\draw [dashed, line width=0.5mm] (1,-1) to (0,0); 
\draw [line width=0.5mm] (4,-4) to [out=60, in=-90] (6,-2); 
\draw [line width=0.5mm] (6,-2) to [out=120, in=-90] (5,-1) to [out=90, in=-90] (2,4); 
\draw [white, line width=2mm] (2,-2) to  [out=60, in=-90] (7,2.5); 
\draw [line width=0.5mm] (2,-2) to  [out=60, in=-90] (7,2.5); 
\draw [white, line width=2mm] (6,-2) to [out=60, in=-90] (7,-1) to [out=90, in=-90] (4,4); 
\draw [line width=0.5mm] (6,-2) to [out=60, in=-90] (7,-1) to [out=90, in=-90] (4,4); 
\draw [line width=0.5mm] (7,2.5) to [out=45, in=-90] (8,4); 
\draw [line width=0.5mm] (7,2.5) to [out=135, in=-90] (6,4); 
\node (k) at (2,4.6) {\Large $x$};
\node (k) at (4,4.6) {\Large $y$};
\node (k) at (6,4.6) {\Large $x$};
\node (k) at (8,4.6) {\Large $y$};
\node (k) at (0.4,-1.5) {\Large $b_{i-1}$};
\node (k) at (3.5,-1.5) {\Large $a_{i}$};
\node (k) at (6.5,-3.5) {\Large $a_{i+1}$};
\node (k) at (2.4,-3.5) {\Large $b_{i}$};
\node (k) at (4.2,-5.3) {\Large $b_{i+1}$};
\end{scope}
\begin{scope}[xshift=5.5cm, yshift=-12cm]
\node (k) at (0,-1) {\Large $=\sum\limits_{c,k,b,p,b'_{i},a'_{i}}\eta(c,k,b,p,b'_{i},a'_{i})$};
\node (k) at (18,-1) {\Large .};
\end{scope}
\begin{scope}[xshift=13cm, yshift=-9cm]
\draw [line width=0.5mm] (1,-1) to (5,-5); 
\draw [dashed, line width=0.5mm] (5,-5) to (6.5,-6.5); 
\draw [dashed, line width=0.5mm] (1,-1) to (0,0); 
\draw [line width=0.5mm] (2,-2) to (3,-1); 
\draw [line width=0.5mm] (3,-1) to [out=45, in=-90] (4,0); 
\draw [line width=0.5mm] (3,-1) to [out=135, in=-90] (2,0); 
\draw [line width=0.5mm] (4,-4) to (7,-1); 
\draw [line width=0.5mm] (7,-1) to [out=45, in=-90] (8,0); 
\draw [line width=0.5mm] (7,-1) to [out=135, in=-90] (6,0); 
\node (k) at (2,0.6) {\Large $x$};
\node (k) at (4,0.6) {\Large $y$};
\node (k) at (6,0.6) {\Large $x$};
\node (k) at (8,0.6) {\Large $y$};
\node (k) at (0.4,-1.5) {\Large $b_{i-1}$};
\node (k) at (3.2,-1.8) {\Large $a'_{i}$};
\node (k) at (6.3,-2.8) {\Large $a_{i}$};
\node (k) at (2.4,-3.5) {\Large $b'_{i}$};
\node (k) at (4.2,-5.3) {\Large $b_{i+1}$};
\end{scope}
\end{tikzpicture}
\]
where $\eta(c,k,b,p,b'_{i},a'_{i})=\bar{F}^{b_{i}xy}_{b_{i+1};ca_{i+1}}F^{b_{i-1}xa_{i}}_{c;kb_i}R^{xa_{i}}_{k}\bar{F}^{b_{i-1}xa_{i}}_{c;bk}F^{bya_{i}}_{b_{i+1};pc}\bar{R}^{ya_{i}}_{p}\bar{F}^{bya_{i}}_{b_{i+1};b'_{i}p}F^{b_{i-1}xy}_{b'_{i};a'_ib}$,  $\bar{F}$ is the inverse matrices of $F$ and $\bar{R}$ is the inverse of $R$.

\section{Conclusion}
In this paper, we propose the method of dimension reduction to construct loop braid group representations from braid tensor categories. The advantage of this method is that we can apply the well-developed theory of braid tensor categories rather than (3+1) TQFTs. The question becomes searching for two objects whose tensor product having trivial double braiding. This kind of objects are generally few since their quantum dimensions are square root of integers. Topologically, once having $\text{LB}_n$ represenations, we can continue to acquire the invariants for welded links in the spirit of \cite{Turaevknot}. This will be discussed in another paper.

Moreover, the dimension reduction approach can be used for more motion groups of links. Another typical example is the necklace braid groups. This name comes from the shape of the motion where the moving knots are linked to a base ring just like a necklace. The case that several circles are linked to another big circle was discussed in \cite{BKMR, WL}. Another case is that a sequence of torus knots (like trefoil knots) are linked to a base circle. This motion group was firstly studied in \cite{Gold} and should give more interesting and complicated braiding than that from the loop braid groups.

\vspace{1cm}
\noindent\textbf{ACKNOWLEDGEMENT} 

 The author acknowledges the support from NSFC Grant No. 11701293.
 
\vspace{1cm}
\noindent\textbf{DATA AVAILABILITY STATEMENTS} 

The data that supports the findings of this study are available within this article.

\end{document}